\newtheorem{theorem}{Theorem}
\newtheorem{problem}{Problem}
\newtheorem{lemma}{Lemma}
\theoremstyle{definition}
\newtheorem{defn}{Definition}
\theoremstyle{remark}
\newtheorem*{rem}{Remark}
\newcommand{\calP}{{\cal P}}
\newcommand{\calQ}{{\cal Q}}
\newcommand{\calZ}{{\cal Z}}
\newcommand{\Q}{{\mathbb Q}}
\newcommand{\R}{{\mathbb R}}
\newcommand{\Z}{{\mathbb Z}}
\def\e{\varepsilon}
\def\eps{\varepsilon}
\def\x{\bf{x}}
\def\y{\bf{y}}
\newcommand{\commentNICK}[1]{{\color{red} #1}}
\newcommand{\commentSINAI}[1]{{\color{green} #1}}
\title{Convex curves and a Poisson imitation of lattices}
\author{Nick Gravin, Fedor Petrov, Dmitry Shiryaev and Sinai Robins}
\begin{document}

\maketitle
\begin{abstract}
We solve a randomized version of the following open question:   is there a strictly convex, bounded curve 
$\gamma \subset \mathbb R^2$ such that the number of rational points on $\gamma$, with denominator $n$, approaches infinity with $n$?  Although this natural problem appears to be out of reach using current methods, we consider a probabilistic analogue using a spatial Poisson-process that simulates the refined rational lattice $\frac{1}{d}\mathbb Z^2$, which we call $M_d$, for each natural number $d$.   The main result here is that with probability $1$ there
exists a  strictly convex, bounded curve $\gamma$ such that
$
|\gamma\cap M_{d}|\rightarrow  +\infty,
$
as $d$ tends to infinity.   The methods include the notion of a generalized affine length of a convex curve, defined in  \cite{P2}.

\end{abstract}

\section{Introduction}

We first recall a natural and as yet unsolved problem from  \cite{P2}, that
arose in the context of the geometry of numbers:

\begin{problem}\label{motivation}
Does there exist a strictly convex bounded curve $\gamma$
in the plane such that, as $n$ tends to infinity,
\begin{equation}\label{eq:lattice}
\left|\gamma\cap \frac1n \mathbb{Z}^2\right|  \rightarrow \infty?
\end{equation}
\end{problem}

The problem we address here is a probabilistic
analogue of problem \eqref{motivation}, for which we first give some motivation and definitions.
We recall from \cite{P2} the following results, giving lower bounds for rational points on any strictly 
convex curve $\gamma$ in the plane:

\begin{align}
\left|\gamma\cap \bigcup_{k=1}^n\frac1k \mathbb{Z}^2\right|&=O(n), \\
\sum_{k=1}^n \left|\gamma\cap \frac1k \mathbb{Z}^2\right|&=O(n\log n).
\end{align}

The latter lower bound means that in the case of a positive answer to problem \ref{motivation},
the number of points of $ \frac1n \mathbb{Z}^2$ on $\gamma$ cannot converge to infinity
faster than $\log n$.  We also recall  that in higher dimensions there is no gap in such asymptotic estimates. Namely, it is proved in \cite{P2} that for a bounded, closed, and strictly convex surface 
$\Gamma$ in $\mathbb{R}^d$, with $d\geq 3$, we have
$$
\liminf \frac{|\Gamma\cap n^{-1}\mathbb{Z}^d|}{n^{d-2}}<\infty.
$$

As added motivation, we note that there are probabilistic models of rigid arithmetical objects,
like the integers or the primes, which capture many
quantitative characteristics of
the original rigid structure.   These probabilistic analogues are often easier to handle and offer support for the validity of their  corresponding ``rigid'' statements.
For instance, the Riemann Hypothesis may be considered as a statement
that the M\"obius $\mu$-function displays some quasi-random behavior. Specifically,
the following asymptotic for the partial sums of the M\"{o}bius function $\mu(n)$ is known to be equivalent to RH:
$\sum_{n\leq x} \mu(n)=o(x^{1/2+\eps})$
for any $\eps>0$.  However, if we replace the deterministic sequence $\{ \mu(1),\mu(2),\dots \}$
by independent identically distributed
bounded variables with zero mean, then such an estimate  holds with probability $1$,
by standard methods.
Because there is no evidence that the values of the M\"{o}bius function are highly dependent,
this asymptotic is considered as heuristic support for the Riemann Hypothesis.

Here we deal with another arithmetical object,
the integer lattice $\mathbb{Z}^d$ in $d$-dimensional Euclidean space.  We consider
its natural probabilistic analogue: a random configuration of points given by a Poisson process with
intensity $1$ in $\mathbb{R}^d$.

As a non-example for problem \eqref{motivation}, consider the unit circle $S$, centered at the origin.
 Then $\frac1n \sum_{k=1}^n |S\cap n^{-1}\mathbb{Z}^2|$
tends to infinity. In other words, on average the unit circle has many rational points.
However, for any prime $n$ the unit circle has only at most 12 rational points
with denominator $n$, and hence the circle does not satisfy
property \eqref{eq:lattice}.   Moreover, if we consider any real algebraic curve $\gamma$ in the plane, of genus $g \geq 2$,  we know by Faltings' theorem that it contains only finitely many rational points.   Hence all such curves $\gamma$ do not satisfy \eqref{eq:lattice}.
Though we do not know of any formal argument that allows us to resolve problem \ref{motivation},
these counterexamples suggest a negative answer to this problem.

On the other hand, ``most'' convex curves are not algebraic, and if we pick a convex curve $\gamma$
``at random'', it is natural to ask if $\gamma$ might still satisfy
property \eqref{eq:lattice}?

In sharp contrast with algebraic curves, it turns out that if we work in the probabilistic framework suggested above, then we find
here an affirmative answer to a very natural probabilistic analogue of problem \ref{motivation}.

To describe the main result, we first order all of the prime powers in ascending order, as follows:
$q_1 = 2, q_2 = 3, q_3 = 2^2, q_4 = 5, q_5 = 7,
q_6=2^3, q_7=3^2, q_8 = 11, q_9 = 13, q_{10} = 2^4$, and so on.   For technical reasons that are explicated in
the preliminaries section \ref{preliminaries} below,
we define $w_{q_k}:= q_k^2 \left( 1- \frac{1}{p^2} \right)$, where $q_k$ is a power of the prime $p$.  The value $w_{q_k}$ will be the intensity of our Poisson process.

We define our randomized analogue of a lattice
to be a Poisson point set $M_{q_k}$, with intensity equal to $w_{q_k}$, for each $q_k$ which is a power of a prime.
  In other words, the randomized point set $M_{q_k}$ by definition satisfies a spatial Poisson process distribution.   It will turn out that the only property we really require in our analysis is that the intensity $w_{q_k} > \frac{q_k^2}{2}$.
In  section \ref{preliminaries}, we define a more general Poisson analogue of a lattice, called $M_d$ for any integer $d$, but
we show that due to the elementary combinatorial
structure of $M_d$, it is sufficient to work with the particular analogue $M_{q_k}$ defined here.


\begin{theorem}\label{tm:main} With probability $1$ there
exists a  strictly convex, bounded curve $\gamma$
such that
$$
|\gamma\cap M_{q_k}|\rightarrow  +\infty,
$$
as $k$ tends to $\infty$.
\end{theorem}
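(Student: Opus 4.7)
The plan is to construct $\gamma$ as the boundary of an increasing union of inscribed strictly convex polygons $\Pi_0\subset\Pi_1\subset\cdots$ contained in $\mathrm{conv}(\Gamma)$, where $\Gamma$ is a fixed smooth strictly convex reference curve (say the unit circle). At stage $k$ I add $n_k$ new vertices drawn from $M_{q_k}$, all lying in a thin annular shell just inside $\Gamma$; the limit curve $\gamma:=\partial(\overline{\bigcup_k\Pi_k})$ then contains every chosen vertex by construction, and the main content of the proof is to show $n_k\to\infty$ and that $\gamma$ is strictly convex.

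\emph{Key probabilistic input.} In the shell $S_k$ of radial width $\delta_k$ just inside $\Gamma$, the expected number of points of $M_{q_k}$ is $\asymp w_{q_k}L(\Gamma)\delta_k$. Since $w_{q_k}\geq q_k^2/2$ and the $k$-th prime power $q_k$ grows roughly like $k\log k$, I may choose $\delta_k\to 0$ with $w_{q_k}\delta_k\to\infty$. In each lune $L_e$ bounded by an edge $e$ of $\Pi_{k-1}$ and the corresponding arc of $\Gamma$, the expected count of $M_{q_k}\cap L_e$ also tends to infinity (after balancing $\delta_k$ against the total vertex count $m_{k-1}=|\mathrm{vert}(\Pi_{k-1})|$, using that lune area is $\asymp m_{k-1}^{-3}$). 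A standard bound on the size of the convex hull of Poisson points in a thin region then yields a subset of $M_{q_k}\cap L_e$ in convex position, of divergent size; summing over edges produces a chain $C_k\subset M_{q_k}$ with $|C_k|=n_k\to\infty$, simultaneously for all large $k$ almost surely by Borel--Cantelli.

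\emph{Inductive construction.} Starting from a fixed inscribed triangle $\Pi_0$, given $\Pi_{k-1}$ I form $C_k$ as above and set $\Pi_k:=\mathrm{conv}(\Pi_{k-1}\cup C_k)$. Every new vertex lies outside $\Pi_{k-1}$ but inside $\mathrm{conv}(\Gamma)$, hence is automatically an extreme point of $\Pi_k$, and no previously chosen vertex is absorbed. Because the Poisson intensities are absolutely continuous, almost surely no three selected vertices are collinear, so each $\Pi_k$ is strictly convex. Setting $K:=\overline{\bigcup_k\Pi_k}$ and $\gamma:=\partial K$, the inclusion $\bigcup_k C_k\subset\gamma$ gives $|\gamma\cap M_{q_k}|\geq n_k\to\infty$.

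\emph{Strict convexity of $\gamma$ --- the main obstacle.} I must show $\gamma$ contains no line segment. Suppose to the contrary that $\sigma\subset\gamma$ is a segment. If both endpoints of $\sigma$ are selected vertices $v,v'$, then later stages introduce vertices in the lune between $v$ and $v'$ along $\Gamma$ (which I arrange by spreading $C_k$ over all current edges of $\Pi_{k-1}$ at every stage), strictly outside $\sigma$; this contradicts $\sigma\subset\partial K$. If some endpoint of $\sigma$ lies on $\Gamma$ itself --- as an accumulation point of the selected vertex set, which is dense on $\Gamma$ by the same spreading --- then $\sigma$'s supporting line would support $\mathrm{conv}(\Gamma)$ at two distinct boundary points, contradicting the strict convexity of $\Gamma$. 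Hence $\gamma$ is strictly convex. The most delicate step is the rate-matching between the new-vertex count $n_k$ and the shrinking lune areas, which relies on the polynomial growth of $w_{q_k}$ (built into the hypothesis $w_{q_k}>q_k^2/2$) against the polynomial growth of $m_k$.
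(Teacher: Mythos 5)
There is a genuine quantitative gap at the step you yourself flag as ``the most delicate'': the rate-matching fails, and in fact fails badly. Your strict-convexity argument requires spreading $C_k$ over \emph{all} current edges of $\Pi_{k-1}$ at every stage, which forces $m_k\geq 2m_{k-1}$, i.e.\ exponential growth of the vertex count; even the weaker demand $n_k\to\infty$ already gives $m_{k-1}\gg k$. Meanwhile the intensity $w_{q_k}\asymp q_k^2\asymp k^2\log^2 k$ grows only polynomially. So under your own estimate that a lune has area $\asymp m_{k-1}^{-3}$ (which moreover presumes the harvested vertices are equidistributed along $\Gamma$ in the affine sense --- random selection does not guarantee this), the expected number of points of $M_{q_k}$ in a given lune is $\lesssim k^2\log^2 k\cdot m_{k-1}^{-3}\to 0$, the opposite of your claim that it tends to infinity; Borel--Cantelli then runs \emph{against} you, showing that almost surely, for all large $k$, most lunes receive no point of $M_{q_k}$ at all. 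The underlying obstruction is that you try to place unboundedly many points of a \emph{single} process $M_{q_k}$ on the curve at a \emph{single} stage, which a polynomial intensity cannot support against the shrinking admissible area. A secondary gap: the assertion that ``no previously chosen vertex is absorbed'' is unjustified --- since your vertices lie strictly inside $\Gamma$, the lunes over two adjacent edges overlap in the region beyond their common vertex $v$, so new points chosen in the two adjacent lunes can form a chord cutting $v$ off from the hull; preventing this needs an outer confining structure, which your shells do not provide.

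The paper's architecture is designed precisely around these two obstructions, and it is worth seeing how. First, it adds only \emph{one} point per stage to a chain confined between an inner chain $\gamma_n$ and an outer chain $\gamma_n'$ (the triangles $\Delta_i$ play the confining role your construction lacks, so earlier vertices can never be absorbed). A potential function --- the generalized affine length $\ell_n$ --- is kept bounded below, and Lemma~\ref{geom}(ii) plus a power-mean argument give total admissible area $s_n\gtrsim \ell_0^3 n^{-2}\log^{-3/4}n$ (no per-triangle equidistribution needed); against intensity $\asymp n^2\log^2 n$ this leaves a margin of only $\log^{5/4}n$ in the exponent, making the failure probability $n^{-C\ell_0^3\log^{1/4}n}$ summable. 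Second, the divergence $|\gamma\cap M_{q_k}|\to\infty$ is obtained not from many points of one process on one chain, but from \emph{countably many independent chains} built in disjoint tangent triangles along a circle: the $i$-th chain contains at least one point of every $M_{q_n}$ with $n\geq N_i$, and the number of contributing chains grows with $n$. Third, strict convexity is proved in infinite time rather than at every stage: each triangle is almost surely eventually subdivided (Lemma~\ref{lm:zeroprob}, via a divergent series $\sum\sqrt{a_n}$-type bound), and a Menelaus argument (Lemma~\ref{lm:lengthbound}) forces segment lengths to $0$. Your insistence on refining every edge at every stage is exactly what makes your version quantitatively infeasible; relaxing refinement to ``eventually, almost surely'' is the paper's way out, and your proposal would need to be rebuilt along these lines.
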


Many geometric questions regarding lattices appear to have similar answers to
their corresponding Poisson framework questions, in the context of a `perturbed lattice'.
Here are some more examples.


Let $K$ be a convex and compact set in the plane. We consider
all possible convex polygons with vertices in $\frac{1}{n}\mathbb{Z}^2\cap K$ for
some large $n$.  We may now ask some natural and intuitive questions:

\begin{problem}\label{one} how many such polygons are there in total?
\end{problem}

\begin{problem}\label{two} How many vertices does such a polygon have?
\end{problem}

\begin{problem}\label{three} What does a typical polygon look like?
\end{problem}

The same questions may be asked for a  random set of points
instead of points chosen from a lattice. For example, we may consider $C(K)\cdot n^2$
independetly and uniformly distributed points in $K$,
for some constant $C(K)$, independent of $n$.
We may also consider a Poisson process inside $K$, with intensity $n^2$.

The answers to above questions \eqref{one}, \eqref{two}, and \eqref{three} 
appear to be very similar in the lattice setting and in the probabilistic setting  (see \cite{V}, \cite{BV}, \cite{B1}).  Amazingly, only the specific values of constants differ. However,  the methods
of the geometry and numbers (lattice setting) and of stochastic
geometry (randomization) do indeed differ.
In this paper we work with a randomized model for the set of rational points $\Q^2$.

\section{Preliminaries}\label{preliminaries}

First, we use the Poisson process of intensity
$1$ in $\R^2$, which we call  $\calZ$,   as our most natural realization for a
randomized relaxation of the integer lattice $\mathbb Z^2$.
 In general, we denote by $\calP^2(n)$ a Poisson process of intensity $n$.
  We recall the definition of this spacial process. For more information, see, e.g. \cite{DVJ}

\begin{defn}
\label{def:poisson}
A Poisson process of intensity $\lambda$ is
characterized by the following two properties:
\begin{itemize}
\item For any region $A$ with area  $| A |$, the
number of events in $A$ obeys a $1$-dimensional
Poisson distribution with
mean $\lambda | A |$.
\item The number of events in any finite
collection of non-overlapping regions are independent of each other.
\end{itemize}
\end{defn}

In order to extend Poisson process to the rational set $\Q^2$ one might first represent
the original lattice by the union $\cup_{n=1}^{\infty}\frac{1}{n}\Z^2$ and then define the
randomized rational lattice as $\cup_{n=1}^{\infty}\frac{1}{n}\calZ^2$. Unfortunately, in this case
the probabilistic object does not enjoy the set-theoretical property of the original lattice,
namely that $\frac{1}{n}\Z^2\subset\frac{1}{m}\Z^2$ for $n|m$.
For this reason we introduce a slightly different randomization process.
We begin with a disjoint decomposition of $\Q^2$ given as follows.

\begin{equation}
\begin{array}{c}
\Q^2=\bigcup\limits_{n=1}^{\infty} L_n', \text{ where}\\
L_n':=\{(a/n,b/n),gcd(a,b,n)=1\}.
\end{array}
\end{equation}

We notice that each set $L_n'$ can be expressed by inclusion-exclusion principle
as a sum (with signs) of scaled integer lattices. In particular, if $p_1,\dots,p_k$
are all prime divisors of $n$, then
\[L_n'=\frac{1}{n}\Z^2-\frac{1}{\nicefrac{n}{p_1}}\Z^2-\dots-\frac{1}{\nicefrac{n}{p_k}}\Z^2+
\frac{1}{\nicefrac{n}{p_1p_2}}\Z^2+\frac{1}{\nicefrac{n}{p_1p_3}}\Z^2+\dots+\frac{(-1)^{k}}{\nicefrac{n}{p_1p_2\dots p_k}}\Z^2 .
\]
Thus in the randomization process it is natural to assume that points in each $L_n'$ are distributed
uniformly in the plane with the following density per unit area
$$
w_n:=n^2\prod_{\substack{p|n,\\ p\, is\, prime}}(1-1/p^2).
$$

We notice that $\frac1n\mathbb{Z}^2=\bigcup\limits_{d|n} L_d'$.
We define $M_n$ as a random Poisson configuration of intensity $w_n$.
Then the random analogue of $\Q^2$ is
$$
\calQ^2:=\bigcup_{n=1}^{\infty} M_n
$$
(processes with different $n$ are mutually independent).
We observe a few useful properties of such a process.
\begin{itemize}
 \item $\bigcup\limits_{d|n} M_d$ coincide with the usual Poisson process of
       intensity $n^2$, which is the standard randomized analog for the
       rational lattice  $\frac{1}{n}\Z^2$ with fixed denominator $n$.
 \item $\bigcup\limits_{d|n_1} M_d\cap\bigcup\limits_{d|n_2} M_d= \bigcup\limits_{d|\gcd(n_1,n_2)} M_d$
       and similarly for lattices $\frac{1}{n_1}\Z^2\cap\frac{1}{n_2}\Z^2=\frac{1}{\gcd(n_1,n_2)}\Z^2$.
       In this sense, the family of $\bigcup\limits_{d|n} M_d$ obeys the same set theoretical structure
       as the rational lattices.
 \item $\bigcup\limits_{k=1}^{n} M_k$ is also a Poisson process of certain intensity;
       its expected number of points coincides with the number of rational points
       with denominators not exceeding $n$ in any unit square of general position, i.e.
       $\bigcup\limits_{k=1}^{n} M_k$ is a natural analog of $\bigcup\limits_{k=1}^{n}\frac{1}{k}\Z^2$.
\end{itemize}

\section{Results about generalized affine length}

The proof uses the notion of a \textit{generalized
affine length of a convex chain} introduced in \cite{P1}.
 Let $S(F)$ denote the doubled area of a polygon
$F$, $\x\times\y$ denote the pseudo-scalar product
of vectors $\x$ and $\y$
(i.e. an oriented area of a parallelogram,
based on these vectors).

Fix a triangle $ABC$ oriented so that
$S=S(ABC)=+\overline{AC}\times \overline{CB}$.

Let $ \gamma = A C_1 C_2 \dots C_k\kern1pt B$
be a strictly convex chain.
We call it $(AB,C)$-chain, if all its
vertices lie inside triangle
$ABC$.

Let $(AB,C)$-chain
$\gamma=AC_1C_2\dots C_kB$ be inscribed in an
$(AB,C)$-chain $\gamma_1=AD_1D_2\dots D_{k+1}B$
(i.e. points $C_i$ lie on respective segments
$D_iD_{i+1}\,(i=1,\,2,\,\dots,\,k)$).
Define a \textit{generalized affine length} of a chain
$\gamma$ with respect to $\gamma_1$ as
\begin{equation}\label{genafflength}
l_A(\gamma:\gamma_1):=
\sum_{i=0}^{k} S(C_iD_{i+1}C_{i+1})^{1/3}\qquad(C_0=A,\;C_{k+1}=B),
\end{equation}

Part (i) of the following lemma is well-known, and part
(ii) is a non-surprising quantitative improvement
of (i).

\begin{lemma}\label{geom} (i) Let points
$P,\,R$ be chosen on the sides $AC$ and
$BC$ of the triangle $ABC$ respectively, and a point
$Q$ --- on the segment $PR$.
Then
$$
S(AQP)^{1/3}+S(BQR)^{1/3}\le S^{1/3}.
$$

(ii) For fixed $0<\alpha<S^{1/3}$ call a point
$Q$ inside $\triangle ABC$ $\alpha$-admissible if
there exist points $P$, $R$ on $AC$, $BC$ respectively
such that $Q$ lies on a segment $PR$ and
$$
S^{1/3}-S(AQP)^{1/3}-S(BQR)^{1/3}\leq \alpha.
$$
Then the area of the set of $\alpha$-admissible
points is not less than $\frac1{8}\sqrt{\alpha}S^{5/6}$.
\end{lemma}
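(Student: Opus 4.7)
I would first normalize affinely to the standard triangle $A = (0,0)$, $C = (1, 0)$, $B = (0, 1)$, where $S = 1$; both inequalities in (i), (ii) scale compatibly, so it suffices to treat this normalized case. Parametrize $P = (p, 0)$, $R = (r, 1-r)$, and $Q = (1-\lambda)P + \lambda R$ with $p, r, \lambda \in [0, 1]$; a direct cross-product computation gives $S(AQP) = \lambda\,p\,(1-r)$ and $S(BQR) = (1-\lambda)(1-p)\,r$. Part (i) becomes
\begin{equation*}
\bigl[\lambda\,p\,(1-r)\bigr]^{1/3} + \bigl[(1-\lambda)(1-p)\,r\bigr]^{1/3} \leq 1,
\end{equation*}
which is H\"older's inequality with three factors and exponent $3$, applied with the pairings $\lambda \leftrightarrow 1-\lambda$, $p \leftrightarrow 1-p$, $(1-r) \leftrightarrow r$---each pair summing to $1$. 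Equality forces $(\lambda, p, 1-r) \parallel (1-\lambda, 1-p, r)$, i.e.\ $\lambda = p = 1-r$; the locus of equality points $Q$ is the inscribed parabola $\gamma_0 = \{Q_t := (2t(1-t), t^2) : t \in [0, 1]\}$.

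For part (ii) I would construct an $\alpha$-admissible tube around $\gamma_0$ by perturbing a single parameter. For each $t \in (0, 1)$ set $p = t + a$, $r = 1-t$, $\lambda = t$, yielding $Q(t, a) = (2t(1-t) + (1-t)a,\, t^2)$ and
\begin{equation*}
f(a) := S(AQP)^{1/3} + S(BQR)^{1/3} = t^{2/3}(t+a)^{1/3} + (1-t)^{2/3}(1-t-a)^{1/3},
\end{equation*}
with $f(0) = 1$, $f'(0) = 0$, and $f$ concave in $a$. For $|a| \leq \tfrac12 \min(t, 1-t)$ one bounds $(t \pm a)^{-5/3}$ by a factor $2^{5/3}$ of their $a=0$ values, giving $|f''(a)| \leq \tfrac{2^{8/3}}{9\,t(1-t)}$; Taylor's theorem with remainder then yields
\begin{equation*}
f(a) \geq 1 - \frac{2^{5/3}}{9}\cdot\frac{a^2}{t(1-t)},
\end{equation*}
so $Q(t, a)$ is $\alpha$-admissible whenever $|a| \leq \rho(t) := \tfrac{3}{2^{5/6}}\sqrt{\alpha\,t(1-t)}$.

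The map $(t, a) \mapsto Q(t, a)$ is injective on $(0, 1) \times \R$ (from $y = t^2$ one recovers $t$, then $a$), with Jacobian of absolute value $2t(1-t)$, so
\begin{equation*}
\mathrm{Area} \geq \int_0^1 4t(1-t)\,\rho(t)\,dt = \frac{12\sqrt{\alpha}}{2^{5/6}}\int_0^1 [t(1-t)]^{3/2}\,dt = \frac{12\sqrt{\alpha}}{2^{5/6}}\cdot B\!\left(\tfrac{5}{2}, \tfrac{5}{2}\right) = \frac{9\pi}{32\cdot 2^{5/6}}\sqrt{\alpha},
\end{equation*}
which exceeds $\tfrac18\sqrt{\alpha}$ by a factor of roughly $4$. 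The main obstacle will be a boundary correction: for $t$ within $O(\sqrt{\alpha})$ of $\{0,1\}$ the Taylor validity condition $|a| \leq \tfrac12 \min(t, 1-t)$ becomes binding, so $\rho$ must be truncated there; this loss is only $O(\alpha)$ in area, dominated by the leading $\sqrt{\alpha}$. For $\alpha$ near $S^{1/3}$ (where truncation is more severe) one uses monotonicity of the admissibility set in $\alpha$ to reduce to a bound at some fixed smaller threshold.
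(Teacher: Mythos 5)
Your strategy is essentially the paper's own: part (i) by a three-factor mean inequality (you use H\"older with the pairings $\lambda\leftrightarrow 1-\lambda$, $p\leftrightarrow 1-p$, $1-r\leftrightarrow r$; the paper writes the two summands as products of three ratios and applies AM--GM, which is the same estimate), and part (ii) by erecting a tube of width $\sim\sqrt{\alpha}$ around the inscribed parabola of equality points and lower-bounding its area. Your computations check: $S(AQP)=\lambda p(1-r)$, $S(BQR)=(1-\lambda)(1-p)r$, the equality locus $Q_t=(2t(1-t),t^2)$, the values $f(0)=1$, $f'(0)=0$, the bound $|f''(a)|\leq \frac{2^{8/3}}{9\,t(1-t)}$ on $|a|\leq\frac12\min(t,1-t)$, the Jacobian $2t(1-t)$, and the integral $\frac{9\pi}{32\cdot 2^{5/6}}\sqrt{\alpha}$ are all correct. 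The paper implements the same idea differently: it controls the AM--GM deficit via the identity $(x+y+z)/3-(xyz)^{1/3}=\frac16(x^{1/3}+y^{1/3}+z^{1/3})\sum (x^{1/3}-y^{1/3})^2$ and uses a \emph{fixed-width} strip $Q=(t,t^2+\tau)$, $|t|\leq\frac12$, $|\tau|\leq\delta=\sqrt{\e}/8$, whose area $2\delta$ is exact; your variable-width tube $\rho(t)\propto\sqrt{\alpha\,t(1-t)}$ gives a better constant for small $\alpha$ but forces the truncation you flag.

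The one genuine gap is your endgame for $\alpha$ near $S^{1/3}$: the proposed monotonicity patch does not close. Since you must cap the width at $\frac12\min(t,1-t)$ for Taylor validity, the area your truncated tube can certify at any threshold $\alpha_0$ is at most $\int_0^1 2t(1-t)\min(t,1-t)\,dt=\frac{5}{48}\approx 0.104$, and indeed for $\alpha_0>2^{5/3}/36\approx 0.088$ the condition $\rho(t)\leq\frac12\min(t,1-t)$ fails for \emph{every} $t$, so the cap is saturated. But the target $\frac18\sqrt{\alpha}$ tends to $\frac18=0.125$ as $\alpha\uparrow 1$ (in your normalization $S=1$), so ``reduce to a fixed smaller threshold'' certifies at most $\frac{5}{48}<\frac18$ and the stated constant is lost for $\alpha\gtrsim 0.7$. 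The paper avoids exactly this by sacrificing optimality of the width: restricting $t$ to the middle half of the parameter range and taking the uniform width $\delta=\sqrt{\e}/8\leq\frac18$ keeps all six ratios in $[1/8,7/8]$ and pairwise within $2\delta$ for every $\e\in(0,1)$ simultaneously, and the strip area is exactly $\sqrt{\e}\,S/8=\frac18\sqrt{\alpha}S^{5/6}$ over the whole stated range. Your argument is repaired the same way (replace $\rho(t)$ by a uniform width on a middle interval of $t$, or compute $1-f$ exactly rather than by Taylor), but as written the lemma is only established for $\alpha$ bounded away from $S^{1/3}$ --- which, to be fair, is the only regime the paper subsequently uses, since $a_n\to 0$.
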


\begin{proof}
 We denote
$$\e=\frac{\alpha}{S^{1/3}}, \text{   } 0<\e<1,$$ and
$$\rm{Err}=1-\bigg(\frac{S(APQ)}S\bigg)^{1/3}-
\bigg(\frac{S(BQR)}S\bigg)^{1/3}.$$
Our goal is to show that
\begin{enumerate}[(i)]
\item $\rm{Err}\geq 0$, and
\item Set of points $Q$, for which $\rm{Err}\leq \e$, has area at least $\frac1{8}\sqrt{\alpha}S^{5/6}=\frac1{8}\sqrt{\e}S$.
\end{enumerate}
By simple geometrical observations,

\begin{align}
\rm{Err}&=1-\bigg(\frac{S(APQ)}S\bigg)^{1/3}-
\bigg(\frac{S(BQR)}S\bigg)^{1/3}\notag\\
&=\frac{1}{3}+\frac{1}{3}+\frac{1}{3}-\bigg(\frac{AP}{AC}\cdot\frac{PQ}{PR}\cdot\frac{RC}{BC}\bigg)^{1/3}-
\bigg(\frac{PC}{AC}\cdot\frac{QR}{PR}\cdot\frac{BR}{BC}\bigg)^{1/3}\notag\\
&=\bigg(\frac13\bigg(\frac{AP}{AC}+\frac{PQ}{PR}+\frac{RC}{BC}\bigg)-
\bigg(\frac{AP}{AC}\cdot\frac{PQ}{PR}\cdot\frac{RC}{BC}\bigg)^{1/3}\bigg)+\notag\\
&\qquad+\bigg(\frac13\bigg(\frac{PC}{AC}+\frac{QR}{PR}
+\frac{BR}{BC}\bigg)-\bigg(\frac{PC}{AC}\cdot\frac{QR}{PR}
\cdot\frac{BR}{BC}\bigg)^{1/3}\bigg)\label{AMGM_sum}
\end{align}

Later is the sum of two expressions of the form
$(x+y+z)/3-(xyz)^{1/3}$. Hence the proof
of (i) is finished by
applying AM-GM inequality.

For the proof of (ii) we want first to estimate both of the aforementioned expressions in \eqref{AMGM_sum} from above.
We will do it in a particular case when all ratios involved in \eqref{AMGM_sum} lie between $1/8$ and $1$ and differ pairwise by at most $2\delta$,
where $\delta$ is a parameter, value for which we will assign later.

We note that for any non-negative numbers $x, y, z$ the following identity holds:
\begin{align}
&(x+y+z)/3-(xyz)^{1/3}=\notag\\
\label{AMGM_expansion}
=&\frac{1}{6}(x^{1/3}+y^{1/3}+z^{1/3})((x^{1/3}-y^{1/3})^2+(y^{1/3}-z^{1/3})^2+(z^{1/3}-x^{1/3})^2).
\end{align}

Also, when $x,y,z$ lie between $1/8$ and $1$ and differ pairwise by at most $2\delta$,
\begin{align*}
&\left|{x^{1/3}-y^{1/3}}\right|
=\frac{\left|x-y\right|}{\left|x^{2/3}+y^{2/3}+x^{1/3}y^{1/3}\right|}
\leq\frac{2\delta}{3(\frac{1}{8})^{2/3}}
=\frac{8\delta}{3},
\end{align*}
and we can use this to estimate right hand side of \eqref{AMGM_expansion} from above:
\begin{align*}
&\frac{1}{6}(x^{1/3}+y^{1/3}+z^{1/3})((x^{1/3}-y^{1/3})^2+(y^{1/3}-z^{1/3})^2+(z^{1/3}-x^{1/3})^2)\leq\\
\leq &\frac{1}{6}\cdot 3\cdot 3(\frac{8\delta}{3})^2 = \frac{32}{3}\delta^2
\end{align*}

Now, fix $\delta=\sqrt{\e}/8$. It immediately follows that
$$Err\leq 2\frac{32}{3}\delta^2\leq 2\cdot \frac{32}{3}\cdot (\frac{\sqrt{\e}}{8})^2\leq \e,$$
in other words, if all ratios in \eqref{AMGM_sum} lie between $1/8$ and $1$ and differ pairwise by at most $2\delta=\sqrt{\e}/4$,
then $Q$ is $\e$-admissible. It remains to prove that
the locus of such points $Q$ has area at least $\delta S=\frac1{8}\sqrt{\e} S$.

The statement of (ii) is preserved under affine
transforms, so without loss of generality, suppose $A=(-1,1)$, $B=(1,1)$, $C=(0,-1)$, and therefore $S=2$.
We will now produce a set of points $Q$ of area $\delta S=2\delta$, for which desired conditions on ratios of segments are satisfied.

\begin{figure}
\centering
\label{fig:triangle}
\def\svgwidth{150pt}
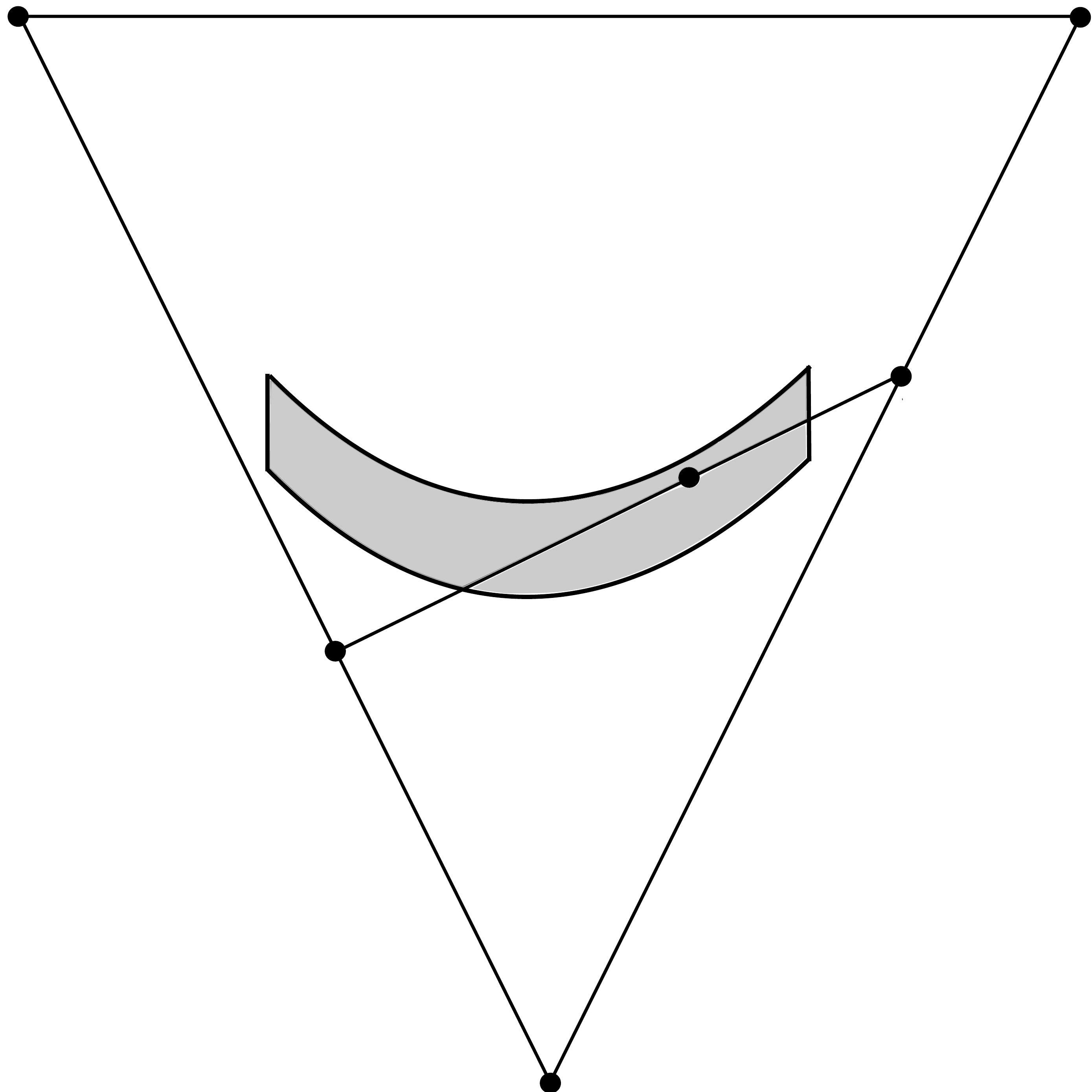
\caption{The shaded area consists of some $\e$-admissible points inside a triangle $ABC$.}
\end{figure}

Consider the point $Q=(t,t^2+\tau)$, $-1/2\leq t\leq 1/2$, $-\delta\leq \tau\leq \delta$.
Draw a line $y=2tx-t^2+\tau$ through $Q$ and let it meet sides $AC$, $BC$ in points $P,\,R$ respectively.
By choice of $\delta=\sqrt{\e}/8\leq 1/8$, all such points $Q$ indeed lie inside triangle $ABC$.
The locus of all such points $Q$ has area exactly $2\delta$.
Also with this choice of $t$ and $\tau$,
points $P$ and $R$ lie on sides $AC$ and $BC$ respectively, not on the side extensions, and so we indeed
get a legitimate triple $P$, $Q$, $R$.
It remains to show that with this choice of points $P$, $Q$, $R$,
desired conditions on ratios of segments are satisfied, which will finish the proof.

Direct calculations show that the ratios $BR:BC$, $QR:PR$, $PC:AC$ are close to $(1+t)/2$
with accuracy $\delta$, and therefore differ pairwise by at most $2\delta$. Also, since
$\delta=\sqrt{\e}/8\leq 1/8$ and $-1/2\leq t\leq 1/2$, it follows that all these ratios are in range
from $(1-\frac{1}{2})/2- \frac{1}{8}$
to $(1+\frac{1}{2})/2+\frac{1}{8}$,
and therefore lie between $1/8$ and $1$. Similarly, the ratios $RC:BC$, $PQ:PR$, $AP:AC$ are close to $(1-t)/2$ with accuracy $\delta$,
so they also differ pairwise by at most $2\delta$ and lie between $1/8$ and $1$, which finishes the proof.
\end{proof}

\begin{rem}
Constant $\frac{1}{8}$ in the second part of Lemma~\ref{geom} is not tight. For the purpose of this work we only need the area to be
$\gtrsim \sqrt{\alpha}S^{5/6}$ and do not elaborate on the tight value for the constant,
however this might be of independent interest.
\end{rem}


\section{A construction of some convex polygonal curves}

Throughout the paper, we adopt the following notation.  We write $f(n)\gtrsim g(n)$ to mean
that $f(n)$ grows asymptotically at least as fast as the function $g(n)$, as $n$ tends to infinity.
Formally, if $f(n)$ and $g(n)$ are two non-negative
functions on integers, $f(n)\gtrsim g(n)$ if and only if there exist some positive constants $C$ and $N$, such that for each $n>N$, we have $f(n)\geq C g(n)$.

In this section we first give an overview of the ideas that are used in the proof of the main theorem, and we begin by
 constructing some polygonal curves whose limit will later be our strictly convex bounded curve $\gamma$.
We define inductively a sequence of convex $(AB,C)$-chains $\gamma_n$.
We start with the chain $\gamma_1=AB$.  Then at each step we modify $\gamma_n$ to $\gamma_{n+1}$ by appending one new point $Q$ to it, where Q comes from our pseudo-lattice, in such a way that $\gamma_{n+1}$ is a convex $(AB,C)$-chain and the probability that $Q$ is in $M_{q_n}$ is sufficiently high.

We also construct an auxiliary $(AB,C)$-chain $\gamma_n'$, such that at each step $\gamma_n$ is inscribed in $\gamma_n'$.
We will keep track of two quantities. The first quantity is $\ell_n$, the generalized affine perimeter of $\gamma_n$ with respect to $\gamma_n'$, defined by \eqref{genafflength}. It will be useful to prove that under some conditions $\ell_n$ is decreasing, but is bounded by positive constant from below. The second quantity is $s_n$, the area of locus of points $Q$ lying between $\gamma_n$ and $\gamma_n'$, such that these points can be added to $\gamma_n$ on the $n$'st step while not causing $\ell_n$ to decrease too rapidly.
Finally, we will estimate $s_n$ from below, as well as show that $s_n$ is decreasing. This lower bound  on $s_n$ will give us a sufficiently high probability for choosing a new point $Q$ from $M_{q_n}$ at the $n$'th step of our construction, for each $n\geq N$.

We now begin the construction of $\gamma_{n+1}$ from $\gamma_n$, while emphasizing the importance of choosing $Q$ appropriately.
We start with a pair of $(AB;C)$-chains $\gamma_1=AB$ and $\gamma_1'=ACB$.
At step~$n$ we have a pair of $(AB,C)$-chains $\gamma_n$ and $\gamma_n'$, $\gamma_n$ being inscribed in $\gamma_n'$,
\begin{align*}
&\gamma_n=AC_1C_2\dots C_{n-1}B,\\
&\gamma_n'=AD_1D_2\dots D_{n}B,
\end{align*}
where $C_i$ lies on $D_iD_{i+1}$ for each $i$.
We also denote triangles $\Delta_i=C_{i-1}D_iC_i$, $i=1,2,\dots,n$, where $C_0=A$, $C_n=B$, and their (doubled) areas as $S_i=S(\Delta_i)$.
Finally, we denote the affine length of $\gamma_n$ w.r.t. $\gamma_n'$ as
\begin{equation}\label{afflength}
\ell_n=l_A(\gamma_n:\gamma_n')=
\sum_{i=1}^{n} S_i^{1/3}.
\end{equation}

\begin{figure}
\centering
\label{fig:gamma}
\def\svgwidth{200pt}
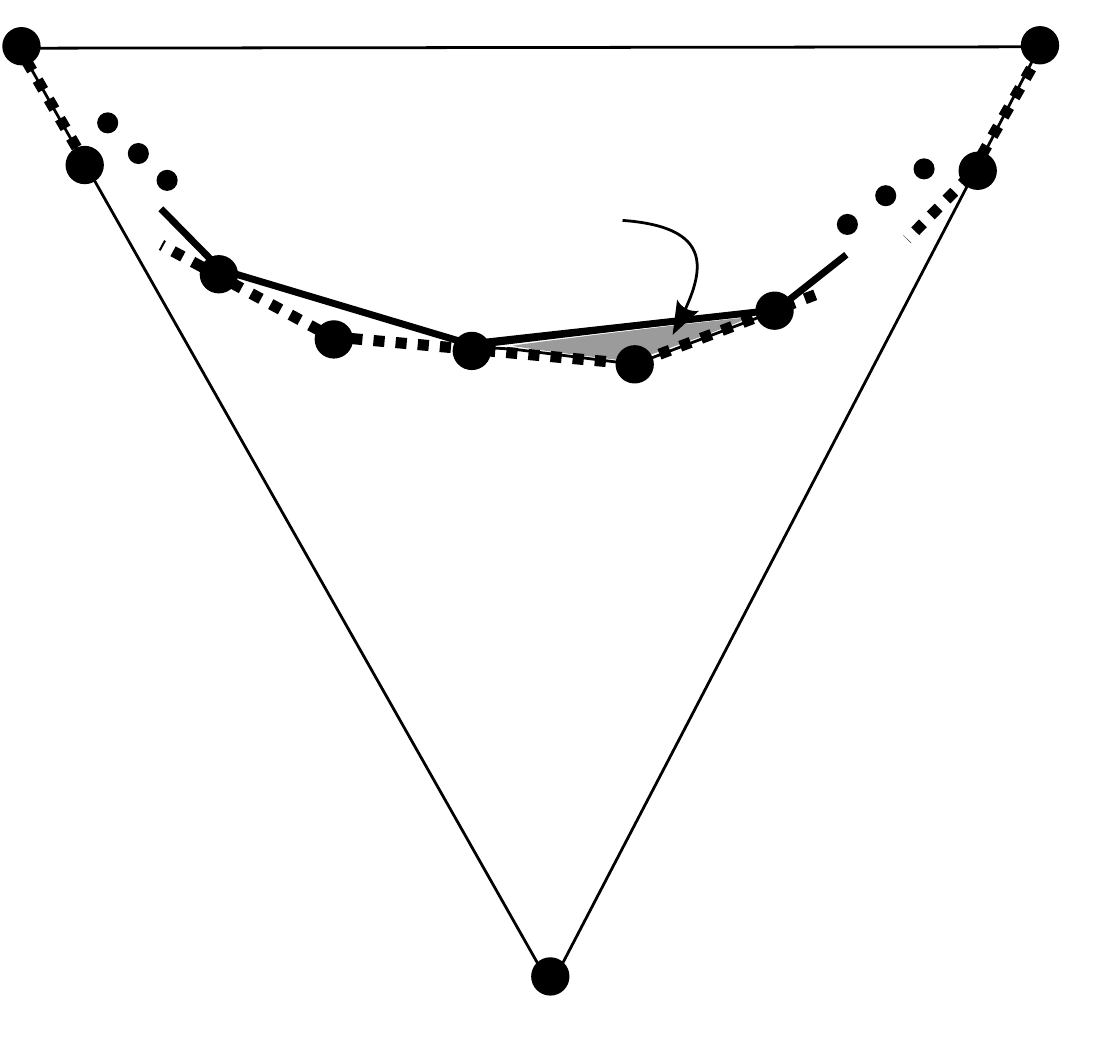
\caption{A triangle $ABC$ with two convex $(AB,C)$-chains $\gamma_n$ (bold) and $\gamma_n'$ (dashed bold) inside.  At the $i$'th step, a small triangle $\Delta_i$ is constructed.}
\end{figure}

Suppose the point $Q$ is chosen inside a triangle $\Delta_i$ and points $P$ and $R$ are chosen respectively on the line segments $C_{i-1}D_i$, $D_iC_i$,
such that $P$, $Q$, $R$ are collinear. Then we define a new pair of convex $(AB,C)$-chains $\gamma_{n+1}$ and $\gamma_{n+1}'$ as follows:
\begin{align*}
\gamma_{n+1}&=AC_1\dots C_{i-1}QC_i\dots C_{n-1}B,\\
\gamma_{n+1}'&=AD_1\dots D_{i-1}PRD_{i+1}\dots D_nB.
\end{align*}

A point $Q \in \bigcup_{i=1}^{n} \Delta_i$ is called  {\bf $\alpha$-\textit{admissible}} if
$\ell_{n}-\ell_{n+1}\leq \alpha$, where $\ell_{n+1}$ is now defined according to \eqref{afflength} using $\gamma_{n+1}$ and $\gamma_{n+1}'$.
We note that this definition is consistent with the definition given in the second part of Lemma~\ref{geom}.

The following Lemma shows that $\ell_n$ is bounded from below.

\bigskip
\begin{lemma}\label{lm:lowerbound}

Suppose a sequence $\ell_n$ has the recursion property that $\ell_n-\ell_{n+1}\leq a_n$, where the sequence $a_n$ is defined by
\begin{equation*}
a_n =
\begin{cases}
 \frac{1}{2} \ell_n  &   \text{if } n=0,1, {\text or } \ 2
\\
2\ell_n n^{-1}(\log n)^{-3/2} & \text{if } n \geq 3.
\end{cases}
\end{equation*}

Then $\ell_n\gtrsim \ell_0$ .
\end{lemma}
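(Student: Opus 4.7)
The plan is to unfold the recursion as a product and show the resulting infinite product converges to a positive limit.

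First I would rewrite the inequality $\ell_n - \ell_{n+1} \leq a_n$ as $\ell_{n+1} \geq \ell_n(1 - c_n)$, where $c_n = 1/2$ for $n \in \{0,1,2\}$ and $c_n = 2 n^{-1}(\log n)^{-3/2}$ for $n \geq 3$. Iterating this yields
\begin{equation*}
\ell_n \geq \ell_0 \prod_{k=0}^{n-1}(1-c_k),
\end{equation*}
so it suffices to show that $\prod_{k=0}^{\infty}(1-c_k)$ is bounded below by a positive absolute constant.

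Next I would choose an integer $N$ large enough that $c_n \leq 1/2$ for all $n \geq N$ (which is clearly possible since $c_n \to 0$). The finite initial product $\prod_{k=0}^{N-1}(1-c_k)$ is a positive constant: each factor lies strictly between $0$ and $1$, and there are finitely many of them. For the tail, I would use the elementary inequality $\log(1-x) \geq -2x$, valid for $0 \leq x \leq 1/2$, to obtain
\begin{equation*}
\log \prod_{k=N}^{\infty}(1-c_k) \;\geq\; -2\sum_{k=N}^{\infty} c_k \;=\; -4\sum_{k=N}^{\infty} \frac{1}{k (\log k)^{3/2}}.
\end{equation*}

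The key analytic step is the convergence of this last series, which follows at once from the integral test via $\int_2^{\infty} \frac{dx}{x(\log x)^{3/2}} = 2(\log 2)^{-1/2} < \infty$ (using the antiderivative $-2(\log x)^{-1/2}$). Therefore the tail product is bounded below by a positive constant, and combining with the bound on the initial product gives $\ell_n \geq C \ell_0$ for an absolute constant $C > 0$ and all $n$, which is exactly $\ell_n \gtrsim \ell_0$. I do not see a genuine obstacle here; the only point requiring care is handling the first few indices, where $c_n = 1/2$ (and in particular $c_3$ is not small), but these contribute only a fixed constant factor and do not affect the asymptotics.
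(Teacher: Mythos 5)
Your proof is correct and follows essentially the same route as the paper's: unfold the recursion into the product $\prod_k(1-c_k)$, take logarithms, bound $\log(1-c_k)$ linearly in $c_k$, and invoke convergence of $\sum_k \frac{1}{k(\log k)^{3/2}}$. If anything, your treatment is slightly more careful than the paper's, since you split off the finite initial product (where $c_k$ is not small and $\log k$ is not even defined for $k=0,1$) and use the genuinely valid inequality $\log(1-x)\geq -2x$ on $[0,\nicefrac12]$, whereas the paper applies its linear bound uniformly from $k=0$.
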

\begin{proof}

Indeed, for each $0\leq k\leq n-1$ we have $\ell_{k}-\ell_{k+1}\leq a_k$, and with formula for $a_k$, we can rewrite this inequality as
$$
\ell_{k}-\ell_{k+1}\leq 2\ell_{k} k^{-1}(\log k)^{-3/2},
$$
which can be again rewritten as
$$
\ell_{k+1}\geq \ell_{k}\left(1-\frac{2}{ k\log^{3/2} k}\right).
$$
Multiplying such inequalities for each $0\leq k\leq n-1$, we get
$$
\ell_{n} \geq \ell_0\prod_{k=0}^{n-1}\left(1-\frac{2}{ k\log^{3/2} k}\right).
$$
Let us denote the product on a right hand side as $X$. In order to show $\ell_n\gtrsim \ell_0$, it is now sufficient to show that $X$ is bounded by a positive constant
from below. Since $0\leq X\leq 1$, this is equivalent to showing that $\log X$ is bounded by some constant from below.
We can expand $\log X$:

\begin{align*}
\log X=
\log \prod_{k=0}^{n-1}\left(1-\frac{2}{ k\log^{3/2} k}\right)=
\sum_{k=0}^{n-1}\log\left(1-\frac{2}{ k\log^{3/2} k}\right)\geq
\sum_{k=0}^{n-1}\left(-\frac{1}{ k\log^{3/2} k}\right).
\end{align*}
The last inequality is true due to the fact that $\log(1-2x)$ is greater than $-x$ for all $0\leq x\leq 0.5$, and the fact that for
each $k>0$  all of the real numbers
$\frac{1}{ k\log^{3/2} k}$ lie in $[0, 0.5]$.
Thus $\log X$ is bounded from below, because the corresponding series
$\sum_{k=0}^{n-1}\left(\frac{1}{ k\log^{3/2} k}\right)$
converges, and the proof of the Lemma is complete.

\end{proof}

If for each $1\leq i\leq n$, the $i$'th step in our process has a corresponding point $Q$ which is $a_i$-admissible, where $a_i$ is defined as in Lemma~\ref{lm:lowerbound}, then by the conclusion of  Lemma~\ref{lm:lowerbound} we have
\begin{equation}\label{length_estim}
\ell_n\gtrsim \ell_0.
\end{equation}

\begin{figure}
\begin{center}
\label{testing}
\includegraphics[scale=0.30]{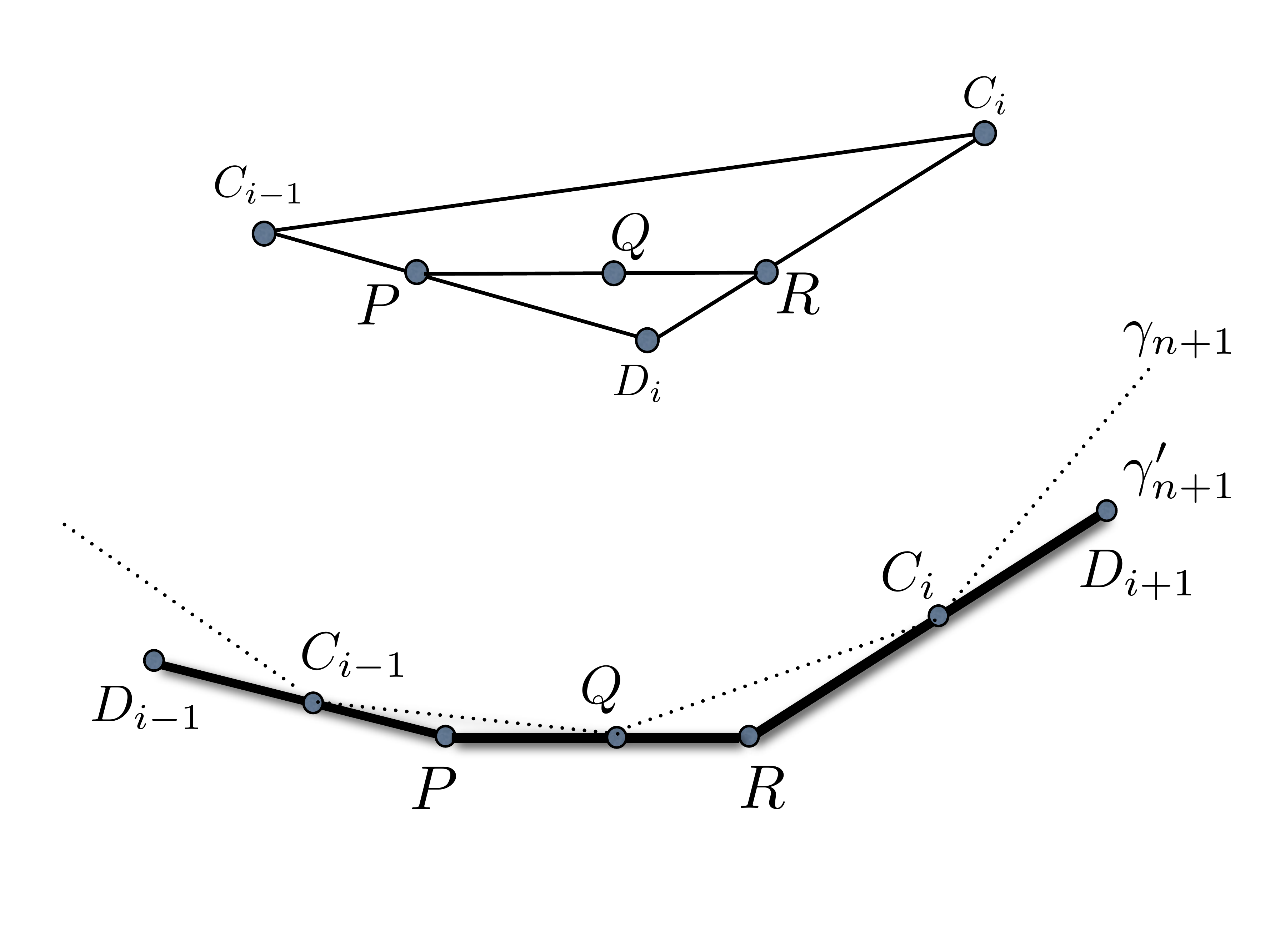}
\end{center}
\caption{At the $n$'th step of our inductive construction, we choose an $a_n$-admissible point $Q$ (above) inside a
 little triangle $\Delta_i$, and then we use the Poisson-point $Q$ to construct two new convex chains $\gamma_{n+1}$ and
 $\gamma_{n+1}'$.
 }
\end{figure}

Now we specify the choice of a point $Q$ at the $n$'th step of our inductive process. We choose $Q$ from
$\bigcup_{i=1}^{n} \Delta_i$ by using the following recipe. If there exists a point of a Poisson process $M_{q_n}$ that is $a_n$-admissible, then we let $Q$ be any such point. Otherwise, we let $Q$ be any $a_n$-admissible point.

We define $s_n$ to be the measure (area) of the set of all points $Q \in \bigcup_{i=1}^{n} \Delta_i$, such that $Q$ is
$a_n$-admissible. Next, we estimate $s_n$ from below.

\bigskip

\bigskip

\begin{lemma}
\begin{equation}\label{smallarea}
 s_n\gtrsim \ell_0^3 n^{-2}\log^{-3/4}n.
\end{equation}
\end{lemma}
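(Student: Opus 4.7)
The plan is to bound $s_n$ from below by summing over the little triangles $\Delta_i$, apply part (ii) of Lemma \ref{geom} in each $\Delta_i$ with $\alpha = a_n$, and then use convexity to pass from the affine length $\ell_n = \sum_i S_i^{1/3}$ (which is controlled by Lemma \ref{lm:lowerbound}) to the sum $\sum_i S_i^{5/6}$ that naturally appears in the area bound.

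First, fix $n\ge 3$ and set $\alpha = a_n = 2\ell_n n^{-1}(\log n)^{-3/2}$. For each $i$ with $a_n < S_i^{1/3}$, Lemma \ref{geom}(ii) applied inside the triangle $\Delta_i$ (whose doubled area is $S_i$) produces an open set of $a_n$-admissible points of area at least $\tfrac{1}{8}\sqrt{a_n}\, S_i^{5/6}$; summing these local contributions,
\begin{equation*}
s_n \;\geq\; \frac{1}{8}\sqrt{a_n}\,\sum_{i=1}^{n} S_i^{5/6}.
\end{equation*}
(For indices with $S_i^{1/3}\le a_n$, the whole triangle $\Delta_i$ is trivially $a_n$-admissible, because by part (i) of Lemma \ref{geom} the drop $\ell_n-\ell_{n+1}$ is always at most $S_i^{1/3}$; so omitting them only weakens the bound, and it is enough to handle the non-degenerate triangles.)

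Next, I would invoke convexity of the function $t\mapsto t^{5/2}$ on $[0,\infty)$: by Jensen's inequality applied to the values $x_i = S_i^{1/3}$,
\begin{equation*}
\sum_{i=1}^{n} S_i^{5/6} \;=\; \sum_{i=1}^{n} \bigl(S_i^{1/3}\bigr)^{5/2} \;\geq\; n\cdot \Bigl(\frac{1}{n}\sum_{i=1}^{n} S_i^{1/3}\Bigr)^{5/2} \;=\; \frac{\ell_n^{5/2}}{n^{3/2}}.
\end{equation*}
Combining with the preceding display and substituting $\sqrt{a_n} = \sqrt{2}\,\ell_n^{1/2}\,n^{-1/2}(\log n)^{-3/4}$ yields
\begin{equation*}
s_n \;\geq\; \frac{\sqrt 2}{8}\,\ell_n^{1/2}\,n^{-1/2}(\log n)^{-3/4}\cdot \ell_n^{5/2}\,n^{-3/2} \;=\; \frac{\sqrt 2}{8}\,\ell_n^{3}\,n^{-2}(\log n)^{-3/4}.
\end{equation*}
Finally, Lemma \ref{lm:lowerbound} gives $\ell_n \gtrsim \ell_0$, so $s_n \gtrsim \ell_0^3\, n^{-2}(\log n)^{-3/4}$, as claimed.

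The main obstacle, and the only subtle point, is confirming that one really can use Lemma \ref{geom}(ii) uniformly in $i$ even though some triangles $\Delta_i$ may have become microscopic; as indicated above, this is resolved by the observation that any point chosen in such a degenerate $\Delta_i$ is automatically $a_n$-admissible (since Lemma \ref{geom}(i) caps the single-step drop by $S_i^{1/3}$), so these triangles only contribute additional admissible area and the Jensen estimate on the remaining indices still gives the same asymptotic lower bound.
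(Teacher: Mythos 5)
Your skeleton is essentially the paper's: apply Lemma~\ref{geom}(ii) in each little triangle with $\alpha=a_n$, convert $\sum_i S_i^{5/6}$ into a power of $\ell_n=\sum_i S_i^{1/3}$ by the power-mean (Jensen) inequality, and finish with $\ell_n\gtrsim\ell_0$ from Lemma~\ref{lm:lowerbound}; your Jensen computation and the substitution $\sqrt{a_n}=\sqrt2\,\ell_n^{1/2}n^{-1/2}\log^{-3/4}n$ are correct. The gap is in your treatment of the small triangles. Lemma~\ref{geom}(ii) requires $0<\alpha<S^{1/3}$, so it applies only to indices with $S_i^{1/3}>a_n$, and for the remaining indices you claim that, since every point of such a $\Delta_i$ is automatically $a_n$-admissible, including them in the bound $s_n\ge\frac18\sqrt{a_n}\sum_{i=1}^{n}S_i^{5/6}$ ``only weakens the bound.'' That step fails: a small triangle contributes at most its actual area $S_i/2$ to $s_n$, and $\frac18\sqrt{a_n}\,S_i^{5/6}>S_i/2$ precisely when $S_i^{1/3}<a_n/16$ (for instance, with $S_i^{1/3}=a_n/100$ your claimed contribution exceeds the entire triangle's area by a factor $2.5$). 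Such microscopic triangles genuinely occur in the process, since areas shrink at every subdivision while $a_n$ decays only like $n^{-1}\log^{-3/2}n$; hence your first display is unjustified, and the subsequent Jensen step, which uses the full sum $\sum_{i=1}^n S_i^{1/3}=\ell_n$ over all $n$ indices, inherits the gap.

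The missing idea is the paper's good/bad splitting: call $i$ bad if $S_i^{1/3}\le a_n$ and good otherwise. Then by the choice of $a_n$ one has $\sum_{i\in B}S_i^{1/3}\le n a_n=2\ell_n\log^{-3/2}n\le\ell_n/2$ for all large $n$, so the good indices carry at least half the affine length, $\sum_{i\in G}S_i^{1/3}\ge\ell_n/2$, which is exactly \eqref{good_sum}. One then discards the bad triangles entirely and runs your power-mean argument over $G$ alone, with $k=|G|\le n$: $\sum_{i\in G}S_i^{5/6}\ge k\left(\frac{\ell_n}{2k}\right)^{5/2}\ge 2^{-5/2}\ell_n^{5/2}n^{-3/2}$, after which your final computation goes through verbatim. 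In short, rather than arguing that the small triangles contribute enough admissible area (they need not), the correct move is to show they carry a negligible fraction of the affine length and drop them; your observation that every point of a bad triangle is $a_n$-admissible is true but does not rescue the inequality as you have written it.
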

\begin{proof}
To prove this, we start from partitioning the set of indices $\{ 1,2,\dots,n \}$
into two sets $G$ and $B$ (good and bad), where $i\in B$ if $S_i^{1/3}\leq a_n$ and $i\in G$ if $S_i^{1/3}> a_n$.
Note that $\sum_{i\in B} S_i^{1/3}\leq na_n\leq \ell_n/2$ by choice of $a_n$,
so
\begin{equation}\label{good_sum}
\sum_{i\in G} S_i^{1/3}\geq \ell_n/2.
\end{equation}
For any good index $i$ (assume $|G|=k$, so there are $k$ good indices)
we can apply Lemma\ref{geom},(ii), to triangle $\Delta_i$:
the set of $a_n$-admissible points in $\Delta_i$ has area at least
$$
\frac{1}{8}\sqrt{a_n}S_i^{5/6}
$$
Then the total area $s_n$ of the set of $a_n$-admissible points can be estimated as follows:
\begin{align*}
s_n\geq \sum_{i\in G} \frac{1}{8}\sqrt{a_n}S_i^{5/6}=
&\frac{1}{8}\sqrt{a_n}k\frac{\sum_{i\in G} S_i^{5/6}}{k}\gtrsim\\
&\sqrt{a_n}k\left(\frac{\sum_{i\in G} S_i^{1/3}}{k}\right)^{\frac{5}{6}:\frac{1}{3}}\geq\\
&\sqrt{a_n}k\left(\frac{\ell_n}{2k}\right)^{5/2}\gtrsim \\
&\sqrt{a_n}\ell_n^{5/2}k^{-3/2}\geq\\
&\sqrt{a_n}\ell_n^{5/2}n^{-3/2}=\\
&{(C_0\ell_n n^{-1}\log^{-3/2} n)}^{1/2}\cdot\ell_n^{5/2}n^{-3/2}\gtrsim\\
&\ell_n^3 n^{-2}\log^{-3/4}n\gtrsim\\
&\ell_0^3 n^{-2}\log^{-3/4}n.
\end{align*}
(our first inequality follows from power mean estimate of $\sum S_i^{5/6}$ via
$\sum S_i^{1/3}$, second inequality follows from \eqref{good_sum}, and the last inequality follows from \eqref{length_estim}).
\end{proof}


\begin{rem}
$s_n$ is bounded from above by the area of $ABC$.
\end{rem}

\begin{lemma}\label{lm:zeroprob}
Probability that a given triangle $\Delta$ is never divided in our process is $0$.
\end{lemma}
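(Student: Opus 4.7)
Plan: The plan is to apply the first Borel--Cantelli lemma to independent events arising from the mutually independent Poisson processes $M_{q_n}$, with Lemma~\ref{geom}(ii) supplying the key quantitative lower bound on the area of admissible points inside $\Delta$.

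Fix the triangle $\Delta$ appearing at step $n_0$ and condition on the history up through that step, so that $\Delta$ and its doubled area $S := S(\Delta) > 0$ are deterministic. The sequence $\ell_n$ is non\-increasing (each subdivision can only decrease the generalized affine length by Lemma~\ref{geom}(i)), hence bounded above by $\ell_1$, so $a_n = 2\ell_n n^{-1}(\log n)^{-3/2}\to 0$. Thus for all $n$ larger than some threshold $N_0 = N_0(\Delta)$ we have $a_n < S^{1/3}$, and Lemma~\ref{geom}(ii) applied to $\Delta$ (with $D_i$ playing the role of the apex $C$) produces a set $R_n\subseteq\Delta$ of $a_n$-admissible points whose area is at least $\tfrac{1}{8}\sqrt{a_n}\,S^{5/6}$.

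Next, bound from below the expected number of points of $M_{q_n}$ in $R_n$. Using $w_{q_n} \ge q_n^2/2 \ge (n+1)^2/2$, the lower bound $\ell_n \gtrsim \ell_0$ from Lemma~\ref{lm:lowerbound} (which gives $\sqrt{a_n}\gtrsim n^{-1/2}(\log n)^{-3/4}$), one obtains
$$
\mu_n \; := \; w_{q_n}\cdot\mathrm{area}(R_n) \;\gtrsim\; n^{3/2}(\log n)^{-3/4}\,S^{5/6},
$$
which tends to infinity. Set $F_n := \{M_{q_n}\cap R_n = \emptyset\}$; then $P(F_n)=e^{-\mu_n}$ and $\sum_{n\ge N_0} P(F_n) < \infty$ since $\mu_n$ grows super-logarithmically. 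Because the processes $M_{q_n}$ are mutually independent, so are the events $F_n$, and the first Borel--Cantelli lemma yields $P(\limsup_n F_n)=0$: almost surely, for all sufficiently large $n$ the process $M_{q_n}$ contains an $a_n$-admissible point inside $\Delta$.

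The construction's recipe chooses $Q_n$ from the set of $a_n$-admissible Poisson points whenever it is nonempty, and the Borel--Cantelli conclusion above guarantees that $\Delta$ contributes to this set at all sufficiently large steps with probability one. After fixing a concrete tiebreaking rule for ``any such point'' (for example, selecting the admissible Poisson point with smallest lexicographic coordinate inside $\bigcup_i \Delta_i$), one checks that the conditional probability of $\{Q_n\in\Delta\}$ given the history up to step $n$ stays bounded below by a summable-to-infinity sequence, so that the conditional Borel--Cantelli lemma places $Q_n \in \Delta$ for some $n\ge n_0$ almost surely; this forces $\Delta$ to be subdivided. The principal obstacle is exactly this last reconciliation between the freedom in the rule ``any'' and the concrete event $\{Q_n\in\Delta\}$: the probabilistic heart of the lemma is the Borel--Cantelli step, and any natural implementation of the selection rule carries the argument through to the stated zero probability.
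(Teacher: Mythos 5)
Your proposal has a genuine gap, and you in fact name it yourself in the last paragraph: everything you actually prove is that, almost surely, for all large $n$ the process $M_{q_n}$ contains an $a_n$-admissible point inside $\Delta$. That statement is insufficient (and, as it turns out, not even needed), because the recipe selects $Q_n$ from the admissible points of $M_{q_n}$ in \emph{all} of $\bigcup_i\Delta_i$, so $\Delta$ can host admissible Poisson points at every step and still never be chosen. The step you defer --- showing that the conditional probability of $\{Q_n\in\Delta\}$ given the history is bounded below by a sequence with divergent sum --- is not a routine tiebreaking detail; it \emph{is} the entire content of the paper's proof. The paper (implicitly taking the selection to be uniform among the admissible Poisson points, so that the chosen point is uniform on the admissible set) computes this probability as $A_n/s_n$, bounds it below by $\frac18\sqrt{a_n}A^{5/6}/S(ABC)\gtrsim n^{-1/2}(\log n)^{-3/4}$ using Lemma~\ref{geom}(ii) together with the trivial upper bound $s_n\le S(ABC)$, notes the sum diverges, and concludes $\prod_{n>N}\bigl(1-A_n/s_n\bigr)=0$. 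None of these ingredients appears in your argument; your first Borel--Cantelli computation with $\mu_n\to\infty$ addresses a different event.

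Worse, your proposed concrete tiebreak (lexicographically smallest admissible Poisson point) can actually \emph{destroy} the lemma: if the admissible region of $\Delta$ lies lexicographically after the admissible regions of the other triangles, then $Q_n\in\Delta$ forces $M_{q_n}$ to have no admissible point in the preceding region, an event of probability roughly $e^{-w_{q_n}(s_n-A_n)}$, which can be summable in $n$ (the total admissible mass $w_{q_n}s_n$ grows at least like $\log^{5/4}n$ and is typically far larger); the first Borel--Cantelli lemma would then give $Q_n\in\Delta$ only finitely often with probability $1$, the opposite of what is needed. So the claim that ``any natural implementation of the selection rule carries the argument through'' is false: the lemma requires an unbiased (e.g.\ uniform) selection rule, under which the probability is exactly $A_n/s_n$ and the divergence argument of the paper applies. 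A smaller issue: your assertion that the events $F_n$ are independent because the $M_{q_n}$ are independent is not justified as stated, since the admissible set $R_n$ and the threshold $a_n$ depend on the history built from $M_{q_1},\dots,M_{q_{n-1}}$; this is harmless for your first Borel--Cantelli step (summability of conditional bounds suffices, and $\ell_n\gtrsim\ell_0$ holds deterministically by Lemma~\ref{lm:lowerbound}), but it signals the same inattention to history-dependence that undermines the final step.
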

\begin{proof}
Assume that triangle $\Delta$ is never divided after step $N$, we want to find the probability of such event. We note that area $A$ of $\Delta$ remains constant throughout all the steps $n>N$.
Probability that on step $n$ the point $Q$ is chosen from a triangle $\Delta_i$ equals $\frac{A_n}{s_n}$, where $A_n$ is the contribution of $\Delta$ to $s_n$.
So the probability that $\Delta$ is never divided after $N$ steps equals
\begin{equation}
\prod_{n>N}\left(1-\frac{A_n}{s_n}\right).
\end{equation}
Showing that this product is $0$ is equivalent to showing that its inverse is infinity. But the inverse expression can be estimated from below as follows:
\begin{align}
&\prod_{n>N}\left(\frac{1}{1-\frac{A_n}{s_n}}\right) \geq
\prod_{n>N}\left(1+\frac{A_n}{s_n}\right) \geq
\sum_{n>N}\frac{A_n}{s_n} \geq \\
&\sum_{n>N}\frac{\frac{1}{8}\sqrt{a_n}A^{5/6}}{S(ABC)} \geq
\sum_{n>N}C\sqrt{n^{-1}\log^{-3/2}n},
\end{align}
where $C$ is some positive constant. The series in the last expression diverges, which ends the proof.
\end{proof}

The following geometrical lemma is a technical argument that is required to prove that length of segments of $\gamma_n$ approaches $0$ with probability $1$.
\begin{lemma}\label{lm:lengthbound}
Suppose we are in conditions of Lemma~\ref{geom}, and the additional property is satisfied: ratios $AP:AC$, $PQ:PR$, $BR:BC$ lie between $1/8$ and $7/8$. Let $m$
be the maximum of lengths of the sides of $ABC$. Then lengths of all segments $AP$, $PQ$, $QR$, $RB$, $AQ$, $QB$ are at most $\frac{399}{400} m$.
\end{lemma}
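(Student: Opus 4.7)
The plan is to bound each of the six listed segments separately by a quantity strictly smaller than $m$, using only the triangle inequality together with convexity of the distance function. First, since $AP:AC\leq 7/8$ and $|AC|\leq m$, I immediately get $|AP|\leq \tfrac{7}{8}m$, and symmetrically $|RB|\leq \tfrac{7}{8}m$. For the transversal segment $PR$, both $P$ and $R$ lie in $\triangle ABC$, so $|PR|$ is at most the diameter of this triangle, which equals its longest side $m$. (A clean justification: the function $(P,R)\mapsto |P-R|$ is convex on the product $[A,C]\times[B,C]$ of two segments, so its maximum over this set is attained at a corner, i.e.\ at $\max(|AB|,|AC|,|BC|)\leq m$.) Combined with $PQ:PR\leq 7/8$ and $QR:PR = 1-(PQ:PR)\leq 7/8$, this yields $|PQ|,|QR|\leq \tfrac{7}{8}m$.

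The only segments that require more than a one-line bound are the diagonals $AQ$ and $QB$; this is the step I expect to be the main (mild) obstacle, since naively $|AQ|$ could be close to $|AR|$, which itself can approach $m$. I will write $Q=(1-t)P+tR$ with $t=PQ:PR\in[1/8,7/8]$ and apply the triangle inequality
\[
|AQ|\leq (1-t)|AP|+t|AR|.
\]
This is linear in $t$, so on $[1/8,7/8]$ the maximum is attained at one of the two endpoints, giving
\[
|AQ|\leq \max\!\bigl(\tfrac{7}{8}|AP|+\tfrac{1}{8}|AR|,\ \tfrac{1}{8}|AP|+\tfrac{7}{8}|AR|\bigr).
\]
The convexity principle applied once more to $R\mapsto|AR|$ on the segment $[B,C]$ yields $|AR|\leq\max(|AB|,|AC|)\leq m$. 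Substituting $|AP|\leq\tfrac{7}{8}m$ and $|AR|\leq m$ bounds both candidates by $\tfrac{7}{64}m+\tfrac{56}{64}m=\tfrac{63}{64}m$, and a symmetric computation with $B$ in place of $A$ gives the same bound for $|QB|$.

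Since $\tfrac{63}{64}=0.984375<0.9975=\tfrac{399}{400}$, every listed segment has length at most $\tfrac{399}{400}m$, as required. The essential point driving the strict inequality is that the lower bound $PQ:PR\geq 1/8$ (which forces $Q$ to stay away from $P$ by a definite amount) is used together with the upper bound $AP:AC\leq 7/8$; without both, $|AQ|$ could approach $|AR|\leq m$ with no strict gap. The particular constant $\tfrac{399}{400}$ is far from sharp but is comfortably produced by the computation above.
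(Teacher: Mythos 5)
Your proof is correct, and it takes a genuinely different route from the paper's. The paper handles the two hard segments $AQ$ and $QB$ by introducing the auxiliary point $D=AQ\cap BC$ and applying Menelaus' theorem twice (to $\triangle ACD$ with transversal $P,Q,R$, and to $\triangle PCR$ with transversal $A,Q,D$), obtaining $AQ/QD\leq 57\cdot 7=399$ and hence $AQ\leq\frac{399}{400}AD\leq\frac{399}{400}m$ --- which is exactly where the constant $\frac{399}{400}$ in the statement comes from. You instead write $Q=(1-t)P+tR$ with $t=PQ:PR\in[1/8,7/8]$ and use only convexity of the norm, $|AQ|\leq(1-t)|AP|+t|AR|$, linearity in $t$, and the endpoint bounds $|AP|\leq\frac{7}{8}m$, $|AR|\leq m$, getting $|AQ|,|QB|\leq\frac{63}{64}m$. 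This is more elementary (no Menelaus, no auxiliary intersection point) and yields a sharper constant, since $\frac{63}{64}<\frac{399}{400}$; as the lemma is only used downstream to force the segments of $\gamma_n$ to contract by some fixed factor strictly less than $1$, any such constant suffices, so your bound is strictly stronger than required. Incidentally, your $\frac{7}{8}m$ bound on $AP$, $PQ$, $QR$, $RB$ silently corrects a typo in the paper's proof, which asserts $\frac{1}{8}m$ for those four segments. One cosmetic slip in your closing commentary: for $AQ$ the binding constraints are $PQ:PR\leq 7/8$ (keeping $Q$ away from $R$) together with $AP:AC\leq 7/8$, whereas the pair you name ($PQ:PR\geq 1/8$ with $AP:AC\leq 7/8$) mixes the constraints relevant to $QB$ and to $AQ$; since your actual computation maximizes over both endpoints of $t$, this does not affect the validity of the argument.
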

\begin{proof}
From the bounds on ratios it is immediate that lengths of $AP$, $PQ$, $QR$, $RB$ are at most $\frac{1}{8}m$. However estimating $AQ$ and $QB$ needs some more work.
Let $D$ be the point of intersection of $AQ$ and $BC$. We apply Menelaus' theorem to triangle $ACD$ and points $P$, $Q$, $R$:
\begin{equation}\label{eq:menel1}
\frac{AQ}{QD}\cdot\frac{DR}{RC}\cdot\frac{CP}{PA}=1,
\end{equation}
and to triangle $PCR$ and points $A$, $Q$, $D$:
\begin{equation}\label{eq:menel2}
\frac{PA}{AC}\cdot\frac{CD}{DR}\cdot\frac{RQ}{QP}=1.
\end{equation}
From \eqref{eq:menel2} we get
\begin{equation}\label{eq:menel3}
\frac{CD}{DR}=\frac{AC}{PA}\cdot\frac{QP}{RQ}\leq 8\cdot 7=56.
\end{equation}
Now combining \eqref{eq:menel1} with \eqref{eq:menel3} we get
\begin{equation*}
\frac{AQ}{QD}=\frac{RC}{DR}\cdot\frac{PA}{CP}=(\frac{CD}{DR}+1)\cdot\frac{PA}{CP}\leq (56+1)\cdot 7=399,
\end{equation*}
which finally gives us
\begin{equation*}
AQ\leq 399QD\Rightarrow 400AQ\leq 399(AQ+QD)\Rightarrow AQ\leq \frac{399}{400}AD\Rightarrow AQ\leq \frac{399}{400}m.
\end{equation*}
Length of $QB$ can be estimated similarly.
\end{proof}

\begin{lemma}
Probability that $\gamma$ contains a straight line segment is $0$.
\end{lemma}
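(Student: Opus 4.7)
My plan is to argue by contradiction: assume that with positive probability the limit curve $\gamma$ contains a straight line segment $\sigma$ of positive length, lying on some line $L$.

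The crux is to first establish density of the added vertices $\{A,B\}\cup\{Q_n\}_{n\geq 1}$ in $\gamma$. By Lemma~\ref{lm:zeroprob}, almost surely every triangle $\Delta_i^{(n)}$ is eventually subdivided. To upgrade eventual subdivision into geometric shrinkage of the maximum side among descendants, I would combine this with Lemma~\ref{lm:lengthbound}: whenever the inserted point $Q$ falls in the explicit nice $\alpha$-admissible subset constructed in the proof of Lemma~\ref{geom}(ii) (where the three ratios all lie in $[1/8,7/8]$), the maximum side of each child triangle is at most $\frac{399}{400}$ times the maximum side of the parent. A Borel--Cantelli-type estimate, in the spirit of the divergent series at the end of the proof of Lemma~\ref{lm:zeroprob}, would then show that this nice event recurs infinitely often along the splits of any fixed triangle, so that the maximum side among its descendants tends to zero almost surely. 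Since each arc of $\gamma$ between two consecutive vertices of $\gamma_n$ is contained in the corresponding triangle, the diameters of these arcs tend to zero, yielding density of the vertex set in $\gamma$.

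Given density, I would pick two vertices $V_1,V_2$ in the relative interior of $\sigma$ (they exist because any vertex of $\gamma$ sufficiently close to an interior point of $\sigma$ must itself lie on $\sigma$, as $\gamma$ coincides with $\sigma$ in a neighbourhood of such a point), together with a step $n_0$ at which both are already present in $\gamma_{n_0}$. For every $n\geq n_0$ the sub-chain of $\gamma_n$ joining $V_1$ and $V_2$ can only pass through vertices lying, as points of $\gamma$, on the arc of $\gamma$ from $V_1$ to $V_2$; since this arc is a subsegment of $\sigma\subset L$, every intermediate vertex lies on $L$. Consequently each edge of $\gamma_n$ along that portion of the chain has both endpoints on $L$ and is itself contained in $L$; I will call such an edge a \emph{bad chord}.

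To close, a bad chord $C_{j-1}C_j$ is the base of some triangle $\Delta=C_{j-1}D_jC_j$ of $\gamma_n'$. Non-degeneracy of $\Delta$ forces $D_j\notin L$, so the open interior of $\Delta$ lies strictly in the open half-plane bounded by $L$ that contains $D_j$. By Lemma~\ref{lm:zeroprob}, $\Delta$ is eventually subdivided, producing a new vertex $W$ in its interior, so $W\notin L$. But $W$ is a point of $\gamma$ on the arc between $C_{j-1}$ and $C_j$, which is a subsegment of $\sigma\subset L$, so $W\in L$; this is the desired contradiction. The main obstacle will be the density step: one must show that the nice admissible event recurs infinitely often along an adaptively chosen sequence of splits, which I expect to parallel the divergent-series computation at the end of the proof of Lemma~\ref{lm:zeroprob}.
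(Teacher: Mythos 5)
Your overall plan (segment lengths of $\gamma_n$ tend to $0$ almost surely, hence chain vertices are dense in $\gamma$, hence a segment in $\gamma$ forces collinear configurations that contradict strict convexity) is the same as the paper's, but there is a genuine gap at exactly the step you flag as the main obstacle. You propose to keep the general notion of $a_n$-admissibility and to prove, by a Borel--Cantelli argument modelled on Lemma~\ref{lm:zeroprob}, that the \emph{nice} event (the inserted point lands in the explicit subset where all ratios lie in $[1/8,7/8]$) recurs infinitely often along the splits of each triangle. This cannot be carried out as stated: the construction only says ``let $Q$ be \emph{any} such point,'' so the selection among admissible points is unspecified, and an adversarial tie-breaking rule could avoid the nice subset whenever a non-nice admissible Poisson point is available --- an event of substantial probability at every step --- so no lower bound on the probability of a nice split follows from the stated lemmas. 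Even granting a uniform selection rule, the events at different steps are strongly dependent and attached to an adaptively growing tree of triangles, so you would need a conditional (L\'evy-type) second Borel--Cantelli theorem applied along \emph{every} branch simultaneously, none of which is set up. The paper removes the difficulty at its root with one sentence you are missing: it \emph{redefines} admissibility, taking as the admissible set precisely the nice set constructed in the proof of Lemma~\ref{geom}(ii). That set already satisfies the $[1/8,7/8]$ ratio bounds and already carries the area lower bound $\frac{1}{8}\sqrt{\alpha}\,S^{5/6}$, so nothing in the estimates for $s_n$ or in Lemma~\ref{lm:zeroprob} changes, and now \emph{every} subdivision deterministically shrinks the maximal segment by the factor $\frac{399}{400}$ of Lemma~\ref{lm:lengthbound}; Lemma~\ref{lm:zeroprob} alone then gives that all segment lengths tend to $0$ with probability $1$, with no recurrence argument at all.

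Your endgame, by contrast, is sound but more elaborate than needed: given density of the vertices, you locate two vertices in the relative interior of the segment, show the intermediate chain edges are chords contained in the line $L$, and derive a contradiction by subdividing a triangle erected over such a chord (its apex, hence its interior, is off $L$, yet the new vertex must land on $\gamma$, i.e.\ on $L$). This works, modulo the small check that the triangle over a bad chord is non-degenerate. The paper's closing move is shorter: take a small ball centered at the midpoint of the putative segment; since the segments of $\gamma_n$ shrink to $0$ and $\gamma_n\to\gamma$, infinitely many chain vertices fall in the ball, convexity of $\gamma$ forces them onto the segment, and three collinear vertices already contradict the strict convexity of the chains $\gamma_n$. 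So: fix the density step by building the ratio condition into the definition of admissible points, and your proof goes through; as written, the Borel--Cantelli step fails.
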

\begin{proof}
We will first show that lengths of all segments $C_{i-1}D_i$, $D_iC_i$ and $C_{i-1}C_i$ approach $0$ with probability $1$.
We can additionally require that $\alpha$-admissible points $Q$ satisfy the following property: ratios $AP:AC$, $PQ:PR$, $BR:BC$ lie between $1/8$ and $7/8$.
We can assume that, because the set of $\alpha$-admissible points constructed in the proof of Lemma~\ref{geom} satisfies this property.
Our statement immediately follows from combining lemmas \ref{lm:zeroprob} and \ref{lm:lengthbound}.

Now assume there is a straight line segment $l$ in $\gamma$. Consider a small ball $B$ with the center
in the midpoint of this straight line segment. Lengths of all segments of $\gamma_n$ approach $0$, while $\gamma_n$ themselves approach $\gamma$, so there should exist an infinite sequence of vertices $Q_k$ of $\gamma_{n_k}$ inside the ball $B$. By convexity of $\gamma$, all $Q_k$ should lie on $l$, which contrdicts the fact that all $\gamma_n$ are strictly convex.
\end{proof}

\begin{lemma}\label{lm:main}
Fix any triangle $ABC$ in the plane, and fix any $\e>0$.    Then with probability at least $1-\e$,
 there exists
a convex countable $(AB,C)$-chain $\gamma$ with at least one point from each pseudo-lattice $M_{q_n}$, for some $N$ with  $n \geq N$.
\end{lemma}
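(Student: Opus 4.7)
The plan is to run the inductive construction from the preceding sections, drawing candidate vertices from $M_{q_n}$, and to control via a union bound the event that some step $n\geq N$ fails to place its new vertex inside the prescribed $M_{q_n}$. The recipe already chooses an $a_n$-admissible $Q$ at every step, preferring one from $M_{q_n}$ when available and otherwise any $a_n$-admissible point, so the construction is well-defined pathwise. Lemma~\ref{lm:lowerbound} then guarantees $\ell_n \gtrsim \ell_0$ along every realization, and hence the area bound \eqref{smallarea}, namely $s_n \gtrsim \ell_0^3 n^{-2} \log^{-3/4} n$, holds as a \emph{deterministic} lower bound, independent of which Poisson configurations happen to show up.

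Let $E_n$ be the event that $M_{q_n}$ contains at least one $a_n$-admissible point. The admissible region is a Borel set of area $s_n$, measurable with respect to $M_{q_1},\dots,M_{q_{n-1}}$, so by independence of the $M_{q_k}$ and the Poisson property of Definition~\ref{def:poisson},
$$
\Pr\big[E_n^c \mid M_{q_1},\dots,M_{q_{n-1}}\big]=\exp(-w_{q_n}\,s_n).
$$
Since $q_n$ is the $n$-th prime power and the number of prime powers $\leq x$ is $\pi(x)+O(\sqrt x)\sim x/\log x$, we have $q_n\gtrsim n\log n$, hence $w_{q_n}\geq q_n^2/2\gtrsim n^2\log^2 n$. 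Combining this with the pathwise lower bound on $s_n$ gives $w_{q_n}\,s_n\gtrsim \log^{5/4}n$, and therefore $\Pr[E_n^c]\leq \exp(-c\log^{5/4}n)$ for some $c>0$ and all large $n$; this tail is summable (it is eventually bounded by $n^{-2}$).

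Given $\varepsilon>0$, pick $N$ so large that $\sum_{n\geq N}\Pr[E_n^c]<\varepsilon$; by the union bound $\Pr\big[\bigcap_{n\geq N}E_n\big]\geq 1-\varepsilon$. On this event, the vertex added at every step $n\geq N$ lies in $M_{q_n}$, so the limit $\gamma$ of the chains $\gamma_n$ is a convex, bounded, countable $(AB,C)$-chain (one new vertex per step), strictly convex by the no-segment lemma, containing at least one point of $M_{q_n}$ for every $n\geq N$. The main subtlety is that both $s_n$ and the admissible region are random, adapted to the history $M_{q_1},\dots,M_{q_{n-1}}$; this is overcome precisely by the deterministic pathwise lower bound on $s_n$ coming from Lemma~\ref{lm:lowerbound} and the choice recipe, which converts the conditional Poisson estimate into an unconditional exponentially-small tail suitable for the union bound.
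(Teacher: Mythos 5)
Your proposal is correct and follows essentially the same route as the paper's own proof: estimate the failure probability at step $n$ by $\exp(-w_{q_n}s_n)\leq \exp(-C\ell_0^3\log^{5/4}n)$ using $q_n\gtrsim n\log n$ together with the lower bound \eqref{smallarea}, observe that this is summable, and choose $N$ so that the tail sum is below $\e$. Your explicit treatment of the conditioning on $M_{q_1},\dots,M_{q_{n-1}}$ and the observation that the bound on $s_n$ is pathwise (via Lemma~\ref{lm:lowerbound} and the fallback choice of an admissible $Q$) is a welcome clarification of a step the paper leaves implicit, but it is a refinement of the same argument rather than a different one.
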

\begin{proof}
Consider now a Poisson random
configuartion $M_{q_n}$ on the plane. If $q_n$ is a power of prime $p$, then intensity is $q_n^2\cdot (1-\frac{1}{p^2})$ points
per unit area. Also we note that $q_n\gtrsim n\log n$, and $1-\frac{1}{p^2}\geq \frac{1}{2}$.
Denote the probability that there are no $a_n$-appropriate points w.r.t. $\gamma_n$ by $P_n$.
We can estimate $P_n$ from above using \eqref{smallarea}:
\begin{align}\label{smallprob}
P_n=\exp(-q_n^2(1-\frac{1}{p^2})s_n)
& \leq  \exp\left(-n^2\log^{2}n \cdot C\ell_0^{3}n^{-2}\log^{-3/4}n\right)\notag \\
&=\exp\left(-C\ell_0^{3}\log^{5/4}n\right)\notag \\
&= n^{-C\ell_0^{3}\log^{1/4}n},
\end{align}
where $C$ stands for a positive constant.

By \eqref{smallprob} the series $\sum{P_n}$ does converge, which can be formally written as follows: for any $\e>0$ there exists positive constant $N$, such that
$$\sum_{n\geq N}{P_n}<\e.$$
It means that for any $\e>0$ and for any triangle $ABC$ one may find a big constant
$N$ such that the probability to find a convex countable $(AB,C)$-chain with at least one point
from each pseudo-lattice $L_{q_n}, n\geq N$ is not less than $1-\e$.

\end{proof}

\begin{proof}[Proof of Theorem\ref{tm:main}]
Fix $\e_0>0$.
Consider the countable sequence of points $A_1A_2\dots$ on a circle, monotonically
converging to some point $A_0$. Consider triangles, formed by chords $A_iA_{i+1}$
and tangents in its endpoints. Apply Lemma~\ref{lm:main} for $i$-th triangle taking
$\e_i=\e_0/2^i$. We get a series of convex countable chains $\gamma^i$, each of which has at least
one point from each $L_{q_n}, n\geq N_i$ with probability at least $\e_0/2^i$.
 Note that the events in different triangles are independent,
so the processes inside them do not depend on what have we found in other triangles.
Finally we take $\gamma$ as a union of $\gamma^i$ over all $i$, note that $\gamma$ is convex by the choice of corresponding triangles.
By construction, $\gamma\cap L_{q_n}\rightarrow\infty$  with probability at least $\e_0$.
\end{proof}

\section{Questions}

There are many theorems and open questions concerning
rational points on convex curves and surfaces, which may be
settled for pseudo-lattices as well. For instance, consider the
analogue of the main result of \cite{P1}:   if $\gamma$ is a bounded strictly convex curve in the
Euclidean plane, then $|\gamma\cap \frac1n \mathbb{Z}^2|=o(n^{2/3})$.

The corresponding conjecture for pseudo-lattices is that
with probability $1$ for any sequence of independent
Poisson spatial processes $M_d$ with intensities $d=1,2,\dots$,
the asymptotic bound $|\gamma\cap M_d|=o(d^{2/3})$ holds
for any strictly convex curve $\gamma$.

At the moment, we do not even know whether such a probability measure exists,
and the reason for the difficulty here is that $\gamma$ is arbitrary, which makes it hard to prove measurability.


\begin{thebibliography}{99}

\bibitem{P1} F. V. Petrov.
On the number of rational points on a strictly convex curve.
Funk. Anal. Pril. {\bf 40} (1), 30-42 (2006).
(Russian, English translation:
Func. Anal. App.  {\bf 40} (1), 24-33 (2006)).

\bibitem{P2} F. V. Petrov.
Estimates for the number of rational points on
convex curves and surfaces. Zap. Nauch. Sem. POMI
{\bf 344}, 174-189 (2007). (Russian,
English translation: J. of Math. Sci. {\bf 147}
(6), 7218-7226 (2007)).

\bibitem{V} A. M. Vershik.
The limit form of convex integral polygons
and related problems.
Funkts. Anal. Prilozh. {\bf 28} (1), 16-25 (1994).
(Russian, English translation:
Funct. Anal. Appl. {\bf 28} (1), 13-20 (1994)).

\bibitem{BV} I. Barany, A. M. Vershik.
On the number of convex lattice polytopes.
Geom. Funct. Anal. {\bf 2} (4), 381-393 (1992).

\bibitem{BR} I. Barany, M. Reitzner. Poisson polytopes.
Annals Prob. {\bf 38}, 1507-1531 (2010).

\bibitem{B1} I. Barany.
Random points and lattice points in convex bodies,
Bull. AMS, {\bf 45}, 339-365 (2008).

\bibitem{BVu} I. Barany, V. H. Vu.
Central limit theorems for Gaussian polytopes, Ann. Prob. {\bf 35},
1593-1621 (2007).

\bibitem{DVJ} Daley, D.J, Vere-Jones, D. An Introduction to the Theory of
Point Processes. Springer, New York. (1988)



\end{thebibliography}
\end{document}